

\documentclass[ECP]{ejpecp} 

\newcommand\GT{\mathbb{G}\mathbb{T}}
\newcommand\CC{\mathbb{C}}

\newcommand\RR{{\mathbb R}}






\SHORTTITLE{Asymptotics of Schur functions on almost staircase partitions} 

\TITLE{Asymptotics of Schur functions on almost staircase partitions} 



\AUTHORS{%
  Zhongyang Li\footnote{University of Connecticut, United States of America.
    \EMAIL{zhongyang.li@uconn.edu}}}



\KEYWORDS{Dimer; Limit Shape; Schur Polynomial} 

\AMSSUBJ{80B20} 

\SUBMITTED{May 5, 2020} 
\ACCEPTED{July 2, 2020} 




\VOLUME{0}
\YEAR{}
\PAPERNUM{0}
\DOI{vVOL-PID}


\ABSTRACT{We study the asymptotics of Schur polynomials with partitions $\lambda$ which are almost staircase; more precisely, partitions that differ from $((m-1)(N-1),(m-1)(N-2),\ldots,(m-1),0)$ by at most one component at the beginning as $N\rightarrow \infty$, for a positive integer $m\ge 1$ independent of $N$. By applying either determinant formulas or integral representations for Schur functions, we show that $\frac{1}{N}\log \frac{s_{\lambda}(u_1,\ldots,u_k, x_{k+1},\ldots,x_N)}{s_{\lambda}(x_1,\ldots,x_N)}$ converges to a sum of $k$ single-variable holomorphic functions, each of which  depends on the variable $u_i$ for $1\leq i\leq k$, when there are only finitely many distinct $x_i$'s and each $u_i$ is in a neighborhood of $x_i$, as $N\rightarrow\infty$. The results  are related to the law of large numbers and central limit theorem for the dimer configurations on contracting square-hexagon lattices with certain boundary conditions.}




\begin{document}



\section{Introduction}

Schur polynomials, named after Issai Schur, are a class of symmetric polynomials indexed by decreasing sequences of non-negative integers (partitions), which form a linear basis for the space of all symmetric polynomials; see \cite{IGM15}. Besides their applications in representation theory, Schur polynomials also play an important role in the study of integrable lattice models in statistical mechanics (see \cite{AB07,AB11}). One example of such a model is the dimer model, or equivalently, the random tiling model; see \cite{CKP,KO}. In this paper, we study the asymptotics of Schur polynomials on partitions which are almost periodic; the results are related the law of large numbers and central limit theorem for dimer configurations on contracting square-hexagon lattices. The connection between asymptotics of Schur polynomials and scaling limit of random tilings has been investigated, see \cite{GP15,bg,bg16, BG17} for uniform perfect matchings on the hexagon lattice (random lozenge tiling); \cite{bk} for uniform perfect matchings on the square grid (random domino tiling); and \cite{BL17,ZL18,Li182} for periodically weighted perfect matchings on the square-hexagon lattice. This paper further develops the technique in \cite{BL17,ZL18,Li182} to study the asymptotics of Schur polynomials on more general partitions. We shall begin with the definition of Schur functions.

\subsection{Partitions, Young diagrams and Schur functions}

We denote by $\GT_N^+$ the set of $N$-tuples $\lambda$ of nonnegative integers satisfying $\lambda_1\geq \lambda_2\ldots\geq \lambda_N\geq 0$. For $\lambda\in \GT_N^+$, let
\begin{eqnarray*}
|\lambda|:=\sum_{i=1}^{N}\lambda_i.
\end{eqnarray*}
 
 A graphic way to represent a non-negative signature $\mu$ is through its
\emph{Young diagram} $Y_\lambda$, a collection of $|\lambda|$ boxes arranged on
non-increasing rows aligned on the left: with
$\lambda_1$ boxes on the first row, $\lambda_2$ boxes on the second row, \dots, $\lambda_N$
boxes on the $N$th row.  Note that elements in $\GT_N^+$ are in bijection with all the Young diagrams with $N$ rows (rows are allowed to have zero length).

\subsection{Main Results}
Before stating the main theorem concerning asymptotics of Schur polynomials in this paper, we first introduce a few definitions, notation and assumptions.
For any positive integer $a$, let
\begin{eqnarray*}
[a]=\{1,2,\ldots,a\}.
\end{eqnarray*}

Let 
\begin{eqnarray}
X&=&(x_1,\ldots,x_N).\label{xnn}
\end{eqnarray}
and
\begin{eqnarray*}
W&=&(w_1,\ldots,w_N),
\end{eqnarray*}
where
\begin{eqnarray}
w_i&=&\begin{cases}u_i,\ \mathrm{if}\ 1\leq i\leq k;\\ x_i,\ \mathrm{if}\ k+1\leq i\leq N,\end{cases}\label{wi}
\end{eqnarray}
where $k$ is a fixed positive integer independent of $N$. 

Of special interest is when a lot of values in $x_1,\ldots,x_N$ are equal, in particular, when there are only finitely many distinct values in $x_1,\ldots,x_N$ as $N\rightarrow\infty$. The asymptotics of Schur polynomials $s_{\lambda}(x_1,\ldots,x_N)$ as $N\rightarrow\infty$ in this case are related to the limit shape and height fluctuations of perfect matchings on a square-hexagon lattice with periodic weights; see \cite{BL17,Li182,ZL18}. Let $n$ be a positive integer which is fixed as $N\rightarrow\infty$. We may make the following assumption.

\begin{assumption}\label{ap1} Let $X$ be given by (\ref{xnn}), and assume that in $x_1,\ldots,x_N$, there are only finitely many different values as $N\rightarrow\infty$.
Let $x_1> x_2>\ldots>x_n>0$ be all the distinct values in $X$. For $1\leq j\leq n$, let
\begin{eqnarray*}
K_j^{(N)}=|\{i:1\leq i\leq N, x_i=x_j\}|.
\end{eqnarray*}
Then
\begin{eqnarray}
\lim_{N\rightarrow\infty}\frac{K_j^{(N)}}{N}=\gamma_j\in[0,1].\label{adj}
\end{eqnarray}
such that $\sum_{j=1}^{n}\gamma_j=1$.
\end{assumption}

In Assumption \ref{ap1}, for $1\leq j\leq n$, we assume the asymptotical density of $x_j$ in $x_1,\ldots,x_N$ is a constant $\gamma_j$. In the periodic case when 
\begin{eqnarray*}
x_{l}=x_{s}, \forall\ l,s\in[N],\ [(l-s)\mod n]=0,
\end{eqnarray*}
it is straightforward to check that (\ref{adj}) holds with $\gamma_j=\frac{1}{n}$ for all $1\leq j\leq n$.

The main theorem proved in this paper is the following.

\begin{theorem}\label{t17}Let $\lambda:=(\lambda_1,\ldots,\lambda_N)\in \GT_N^+$. Let $m$ be a fixed finite positive integer independent of $N$ as $N\rightarrow\infty$.
Assume
\begin{enumerate}
\item $\lambda_1=\alpha_1 N+O(1)$, where $\alpha_1>1$ is a fixed positive number independent of $N$.
\item For all the $2\leq j\leq N$, $\lambda_j=(m-1)(N-j)$; and
\item Assumption \ref{ap1} holds.  
 \end{enumerate}
 Then
\begin{eqnarray}
\lim_{N\rightarrow\infty}\frac{1}{N}\log \frac{s_{\lambda}(W)}{s_{\lambda}(X)}=\sum_{1\leq i\leq k}[Q(u_i)-Q(x_i)],\label{mf}
\end{eqnarray}
where 
\begin{eqnarray}
Q(u)=\sum_{1\leq j\leq n}\gamma_j\log\left[\frac{u^m-x_j^m}{u-x_j}\right].\label{qu}
\end{eqnarray}
Moreover, the convergence in (\ref{mf}) is uniform when each $u_i$ is in a compact complex neighborhood of $x_i$.
\end{theorem}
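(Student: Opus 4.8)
The plan is to start from the Weyl (bialternant) determinant formula $s_\lambda(X)=\det\big[x_i^{\ell_j}\big]_{i,j=1}^N\big/\prod_{p<q}(x_p-x_q)$, where $\ell_j=\lambda_j+N-j$, and to exploit the almost-staircase structure of $\lambda$. By hypothesis (2) the exponents are $\ell_j=m(N-j)$ for all $2\le j\le N$, an arithmetic progression of step $m$, while $\ell_1=\lambda_1+N-1=:L=(\alpha_1+1)N+O(1)$ is the only exceptional one. Substituting $t_i=x_i^m$ turns columns $2,\dots,N$ into a Vandermonde matrix in the $t_i$, so expanding the determinant along the first column and collecting the resulting Vandermonde minors yields the factorization
\[
s_\lambda(X)=P(X)\,F(X),\qquad P(X)=\prod_{1\le p<q\le N}\frac{x_p^m-x_q^m}{x_p-x_q},\qquad F(X)=\sum_{i=1}^N\frac{x_i^{L}}{\prod_{p\ne i}(x_i^m-x_p^m)}.
\]
First I would establish this identity for pairwise distinct $x_i$ and then extend it to the repeated-value situation of Assumption \ref{ap1} by continuity, noting that $P$ and $s_\lambda$ are genuine polynomials and that $P$ does not vanish for positive $x_i$ (a coincident factor equals $mx_i^{m-1}$).

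The factor $P$ already produces the whole answer. Writing $\phi(a,b)=(a^m-b^m)/(a-b)$, only the pairs meeting $\{1,\dots,k\}$ contribute to $P(W)/P(X)$: the $k(k-1)/2$ pairs inside $\{1,\dots,k\}$ give an $O(1)$ contribution, while for each fixed $i\le k$ the density hypothesis (\ref{adj}) turns $\frac1N\sum_{q>k}\log\phi(u_i,x_q)$ and $\frac1N\sum_{q>k}\log\phi(x_i,x_q)$ into Riemann sums converging to $Q(u_i)$ and $Q(x_i)$ respectively. Hence $\frac1N\log[P(W)/P(X)]\to\sum_{i=1}^k[Q(u_i)-Q(x_i)]$, which is exactly the right-hand side of (\ref{mf}). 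This step requires no saddle-point analysis and handles all $k$ changed variables at once.

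It therefore remains to prove that the second factor is asymptotically negligible, i.e. $\frac1N\log[F(W)/F(X)]\to 0$. The key is the contour representation
\[
F(X)=\frac{1}{2\pi i}\oint_{\Gamma}\frac{w^{L/m}}{\prod_{p=1}^N(w-x_p^m)}\,dw,
\]
valid for a positively oriented contour $\Gamma$ enclosing the positive numbers $x_p^m$ but not the origin, with a branch of $w^{L/m}$ cut along the negative axis; this representation is manifestly continuous in the $x_p$ and so remains legitimate when values coincide (the enclosed poles simply become higher order). Since $L/m$ and the number of factors are both of order $N$, the integrand has the form $\exp(N\,S_N(w))$ with $S_N(w)\to S(w)=\frac{\alpha_1+1}{m}\log w-\sum_{j=1}^n\gamma_j\log(w-x_j^m)$, and I would evaluate $F$ by steepest descent through the critical point $w_0$ of $S$. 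The decisive observation is that replacing $x_1,\dots,x_k$ by $u_1,\dots,u_k$ alters only finitely many factors $w-x_p^m$, changing $\log$ of the integrand by $O(1)$ rather than $O(N)$; thus $S$, hence the exponential rate and the saddle $w_0$, are unchanged, while $F(W)/F(X)\to\prod_{p\le k}(w_0-x_p^m)/(w_0-u_p^m)$, a bounded nonzero constant. This gives $\frac1N\log[F(W)/F(X)]\to 0$, and uniformity over compact complex neighborhoods follows from the uniformity of the saddle-point estimate in the analytic parameters $u_i$.

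The main obstacle is precisely this last steepest-descent analysis of $F$: one must locate $w_0$, verify it is a non-degenerate dominant saddle, deform $\Gamma$ into a steepest-descent path on which $\mathrm{Re}\,S$ attains its maximum only at $w_0$ while controlling the branch cut, and—most importantly—ensure that as each $u_i$ ranges over a complex neighborhood of $x_i$ the perturbed poles $u_p^m$ neither collide with $w_0$ nor cross $\Gamma$, so that the rate and the saddle genuinely persist and $F(X)\ne 0$ for large $N$. This is where the hypotheses $\alpha_1>1$ and the restriction of each $u_i$ to a suitable neighborhood of $x_i$ are used; by contrast, the reduction to the prefactor $P$ and the identification of its limit with $Q$ are comparatively routine.
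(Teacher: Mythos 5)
Your proposal is correct and follows essentially the same route as the paper: your factorization $s_\lambda = P\cdot F$ with $P=s_{\lambda^{(m)}}$ is exactly the paper's split into $S_1S_2S_3$ (your residue sum $F$ is the paper's equation (\ref{se}), and your contour representation is its Lemma \ref{l63} and (\ref{sds})), your Riemann-sum treatment of $P(W)/P(X)$ is Lemma \ref{l31}, and your steepest-descent argument that the rate function $S$ is insensitive to changing finitely many variables---so that $\frac{1}{N}\log[F(W)/F(X)]\to 0$---is precisely the content of Proposition \ref{p28} combined with Proposition \ref{pp1}. The analytic details you flag as the main obstacle (locating the real saddle $\xi_0>f(0)$ via $y>1$, deforming to the circle through $\xi_0$, and the off-saddle decay bound) are supplied in the paper by Lemmas \ref{l28}, \ref{ll27}, and \ref{ll29}, in the same way you outline.
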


Let $\lambda^{(m)}$ be the staircase partition given by
\begin{eqnarray}
\lambda^{(m)}=((m-1)(N-1),(m-1)(N-2),\ldots, m-1,0).\label{sc}
\end{eqnarray}
 In this case, the asymptotics of Schur functions can be obtained by the following explicit formula (see example 1.3.7 of \cite{IGM15})
\begin{eqnarray}
s_{\lambda^{(m)}}(x_1,\ldots,x_N)=\prod_{1\leq i<j\leq N}\frac{x_i^m-x_j^m}{x_i-x_j};\label{ssf}
\end{eqnarray}
and the asymptotics $\lim_{N\rightarrow\infty}\frac{1}{N}\log \frac{s_{\lambda^{(m)}}(W)}{s_{\lambda^{(m)}}(X)}$ can be shown to be exactly right side of (\ref{mf}) from the formula (\ref{ssf}) in a straightforward way.
Theorem \ref{t17} discusses the asymptotics of Schur polynomials on more general partitions; more precisely, the partition differs from the staircase (\ref{sc}) by at most finitely many entries at the beginning. To prove Theorem \ref{t17}, we shall split $\frac{1}{N}\log \frac{s_{\lambda}(W)}{s_{\lambda}(X)}$ into the sum of 3 terms, one of which is given by $\frac{1}{N}\log \frac{s_{\lambda^{(m)}}(W)}{s_{\lambda^{(m)}}(X)}$. Since it is known that  $\lim_{N\rightarrow\infty}\frac{1}{N}\log \frac{s_{\lambda^{(m)}}(W)}{s_{\lambda^{(m)}}(X)}$ is equal to the right hand side of (\ref{ssf}), it suffices to show that the limit of the sum of the other two terms, as $N\rightarrow \infty$, vanishes. Theorem \ref{t17} is proved in Sect.~\ref{pt17}.

\subsection{Applications}
Recall that a perfect matching, or a dimer configuration, on a graph is a subset of edges such that each vertex is incident to exactly one edge. We consider the probability measure on perfect matchings of a finite graph in which the probability of a perfect matching is proportional to the product of weights of present edges.
 The asymptotical formula for Schur functions given by Theorem \ref{t17} may be used to obtain the limit shapes and fluctuations of perfect matchings on the square-hexagon lattice with certain boundary conditions (contracting square-hexagon lattice), by the general arguments as described in \cite{BL17, ZL18, Li182, GS18}. More precisely, 
\begin{enumerate}
\item for the weighted perfect matching model on a contracting square-hexagon lattice, where the edge weights are periodic with period $1\times n$ and $n$ is an arbitrary fixed positive integer, the partition function (weighted sum of perfect matchings) can be computed by the Schur polynomial depending on edge weights and bottom boundary condition; see Proposition 2.18 of \cite{BL17}.
\item The derivatives of the Schur generating function give the moments of the counting measure for perfect matchings on each row. 
\item The uniform convergence result given by Theorem \ref{t17} also guarantees the convergence of the derivatives, hence the limit of the moments of the counting measure of perfect matchings on each row can be obtained, which gives the limit shape. 
\item To consider the fluctuations around the limit shape, one needs to check the Wick's formula for the moments to obtain the Gaussian fluctuation.
\end{enumerate}
Following the procedure above, we can prove that when the boundary partition differs from the staircase partition by at most one component, the limit shape and limit height fluctuations are the same as those for the staircase partition on the boundary; the latter was studied in \cite{BL17,ZL18}. 

\begin{theorem}\label{ad}Consider a contracting square-hexagon lattice consisting of $2N+1$ rows of vertices as defined in Definition 2.3 of \cite{BL17}, and let $I_2$ be defined as in Definition 2.4 of \cite{BL17}. Assume that the edge weights are assigned periodically with period $n$, i.e. $x_i=x_j$ and $y_i=y_j$ if $(i-j)\mod n=0$, where $n$ is a fixed positive integer independent of $N$. Assume that the boundary partition is given by $(\lambda_1,(m-1)(N-2),(m-1)(N-3),\ldots,(m-1),0)\in \GT_N$ and satisfy the assumptions of Theorem \ref{t17}. For $\kappa\in (0,1)$, let $\mathbf{m}^{\kappa}$ be the weak limit of the counting measure for partitions corresponding to dimer configurations at the level $\kappa$ as $N\rightarrow\infty$, where assume that the bottom boundary is at level 0, while the top boundary is at level 1. Then 
 \begin{equation*}
\int_{\RR}x^{p}\mathbf{m}^{\kappa}(dx)=
\frac{1}{2(p+1)\pi \mathbf{i}}\oint_{C_{x_1,\ldots,x_n}}\frac{dz}{z}\left(zQ_{\kappa}'(z)+\sum_{j=1}^{n}\frac{z}{n(z-x_{j})}\right)^{p+1},
\end{equation*}
where $C_{x_1,\ldots,x_n}$ is a simple, closed, positively oriented, contour containing only the poles $x_1,\ldots,x_n$ of the integrand, and no other singularities; and
\begin{eqnarray*}
Q'_{\kappa}(z)=\frac{1}{n(1-\kappa)}\sum_{1\leq j\leq n}\left(\frac{mu^{m-1}}{z^m-x_j^m}-\frac{1}{z-x_j}\right)+\frac{\kappa}{n(1-\kappa)}\sum_{i\in\{1,2,\ldots,n\}\cap I_2}\frac{y_i}{1+y_iz}.
\end{eqnarray*}
\end{theorem}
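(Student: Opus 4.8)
The plan is to follow the now-standard transfer from Schur generating function asymptotics to limit shapes of dimer models developed in \cite{BL17,ZL18,Li182,GS18}, feeding in the new boundary asymptotics supplied by Theorem \ref{t17}. By Proposition 2.18 of \cite{BL17}, the partition function of the weighted perfect matchings on the contracting square-hexagon lattice with boundary partition $\lambda$ is a Schur polynomial in the edge weights, and the counting measure $\bm^{\kappa}$ of the dimer configuration at level $\kappa$ is encoded by a Schur generating function. First I would write this generating function at level $\kappa$ in the form dictated by the lattice: the boundary factor $s_{\lambda}(W)/s_{\lambda}(X)$, together with the transfer structure built from the weights $y_i$ that propagates the configuration from the bottom boundary (level $0$) up through the square rows to level $\kappa$.

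The second step is to take $\frac{1}{N}\log$ of this generating function and pass to the limit $N\to\infty$. The boundary factor is controlled directly by Theorem \ref{t17}: with periodic weights $x_i=x_j$ whenever $(i-j)\bmod n=0$, Assumption \ref{ap1} holds with $\gamma_j=\frac1n$, and the boundary partition $(\lambda_1,(m-1)(N-2),\ldots,m-1,0)$ meets the hypotheses of Theorem \ref{t17}, so $\frac1N\log\frac{s_{\lambda}(W)}{s_{\lambda}(X)}\to\sum_i[Q(u_i)-Q(x_i)]$ uniformly on compact complex neighborhoods. The crucial observation is that this limit is \emph{identical} to the one for the exact staircase partition $\lambda^{(m)}$, by the remark following Theorem \ref{t17}; hence the level-$0$ boundary contribution is exactly $Q'(z)=\frac1n\sum_j\bigl(\frac{mz^{m-1}}{z^m-x_j^m}-\frac1{z-x_j}\bigr)$, and the whole computation is reduced to the staircase boundary case already treated in \cite{BL17,ZL18}.

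The third step is the extraction of moments. By the general law-of-large-numbers argument for such measures, the uniform convergence in Theorem \ref{t17} guarantees that the derivatives of $\log$ of the Schur generating function also converge; differentiating in $u_i$ and evaluating at $u_i=x_i$ shows $\bm^{\kappa}$ is deterministic with moments given by a single-contour residue computation around $x_1,\ldots,x_n$. Combining the boundary term $\frac1{1-\kappa}Q'(z)$ with the transfer term $\frac{\kappa}{n(1-\kappa)}\sum_{i\in\{1,\ldots,n\}\cap I_2}\frac{y_i}{1+y_iz}$ assembles $Q'_{\kappa}(z)$, while the summand $\frac{z}{n(z-x_j)}$ inside the integrand records the base counting measure of the weights $x_1,\ldots,x_n$; this produces the stated formula for $\int_{\RR}x^{p}\,\bm^{\kappa}(dx)$.

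The main obstacle I expect is not a single hard estimate but the careful bookkeeping needed to (i) justify interchanging the limit $N\to\infty$ with the differentiation in the moment formula, which is precisely where the \emph{uniform} convergence of Theorem \ref{t17} is indispensable, and (ii) correctly identify the $\kappa$-dependent weights $\frac1{1-\kappa}$ and $\frac{\kappa}{1-\kappa}$ coming from the relative heights of the bottom boundary (level $0$) and the top boundary (level $1$) and to verify that only the square rows indexed by $\{1,\ldots,n\}\cap I_2$ contribute the $y_i$ terms. Once it is recognized that Theorem \ref{t17} produces the same asymptotics as the staircase case, the remainder is a direct transcription of the arguments in \cite{BL17,ZL18}, with the $y_i$-transfer contribution unchanged.
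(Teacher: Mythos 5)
Your proposal matches the paper's route exactly: the paper proves Theorem \ref{ad} in one line by invoking Theorem \ref{t17} together with the arguments of Section 8.2 of \cite{BL17}, and your three steps (Schur generating function via Proposition 2.18 of \cite{BL17}, the observation that Theorem \ref{t17} yields the same limit as the staircase boundary so the staircase analysis applies verbatim, and moment extraction via differentiation justified by uniform convergence) are precisely an unpacking of that citation. Your bookkeeping of the $\kappa$-dependent factors and the $I_2$-indexed $y_i$ contributions is consistent with the cited arguments, so the proposal is correct and essentially identical in approach.
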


The proof of Theorem \ref{ad} follows from Theorem \ref{t17} and the same arguments as in Section 8.2 of \cite{BL17}.

The effects of changing boundary conditions to the distribution of dimer configurations have been studied, see, for example, \cite{CKP} for the case of limit shape of perfect matchings on square grid via a variational principle. In particular, it is proved in \cite{CKP}  that if the rescaled (by a $\frac{1}{N}$ multiple) boundary height function has a fixed limit as $N\rightarrow\infty$, then the limit shape is uniquely determined.  See \cite{LB19} for the case that the local statistics of uniform perfect matchings on the hexagonal lattice are preserved under small perturbation of boundary heights. The paper \cite{LB19} discusses the distribution of uniform lozenge tilings near an interior point of the domain, yet the global limit shape is not discussed.  In our assumption, when the boundary condition satisfies Condition (1) in Theorem \ref{t17}, the rescaled region (by an $\frac{1}{N}$ multiple) of the tiling is different, indeed larger even in the scaling limit, from the region of the tiling when the boundary condition is staircase. Theorem \ref{ad} shows that the weak limit of the counting measure at each level does not change.

\section{Proof of Theorem \ref{t17}}\label{pt17}

In this section, we prove Theorem \ref{t17}. 

Assume that $m$ is a fixed positive integer independent of $N$. Let $\lambda^{(m)}\in \GT_N^+$ be the staircase partition of length $N$ as given by (\ref{sc}), and let 
\begin{eqnarray*}
\lambda=(\lambda_1,\ldots,\lambda_N)\in \GT_N^+.
\end{eqnarray*}
be a length-$N$ partition such that the entries of $\lambda$ differ from those of $\lambda^{(m)}$ by at most $\ell$ entries $\lambda_1,\ldots,\lambda_l$ at the beginning, where $\ell$ is a fixed positive integer independent of $N$.

 We consider the asymptotics of
\begin{eqnarray}
\frac{1}{N}\log \frac{s_{\lambda}(W)}{s_{\lambda}(X)}.\label{an}
\end{eqnarray}
as $N\rightarrow\infty$. Note that
\begin{eqnarray*}
\frac{s_{\lambda}(W)}{s_{\lambda}(X)}=S_1\cdot S_2\cdot S_3,
\end{eqnarray*}
where
\begin{eqnarray*}
S_1=\frac{s_{\lambda}(W)}{s_{\lambda^{(m)}}(W)};\qquad
S_2=\frac{s_{\lambda^{(m)}}(W)}{s_{\lambda^{(m)}}(X)};\qquad
S_3=\frac{s_{\lambda^{(m)}}(X)}{s_{\lambda}(X)}.
\end{eqnarray*}
\begin{lemma}\label{l31}Suppose that Assumption \ref{ap1} holds.  Let $W$ be given by (\ref{wi}). Then
\begin{eqnarray*}
\lim_{N\rightarrow\infty}\frac{1}{N}\log S_2=\sum_{1\leq i\leq k}[Q(u_i)-Q(x_i)];
\end{eqnarray*}
where $Q(u)$ is given by (\ref{qu}).
\end{lemma}
\begin{proof}By example 1.3.7 of \cite{IGM15}, we have
\begin{eqnarray*}
s_{\lambda^{(m)}}(X)=\prod_{1\leq i<j\leq N}\frac{x_i^m-x_j^m}{x_i-x_j}.
\end{eqnarray*}
Then by (\ref{wi}) we have
\begin{eqnarray*}
\frac{1}{N}\log S_2&=&\frac{1}{N}\sum_{1\leq i<j\leq N}\log\left(\frac{w_i^m-w_j^m}{x_i^m-x_j^m}\frac{x_i-x_j}{w_i-w_j}\right)\\
&=&\frac{1}{N}\sum_{1\leq i<j\leq k}\log\left(\frac{u_i^m-u_j^m}{x_i^m-x_j^m}\frac{x_i-x_j}{u_i-u_j}\right)+\frac{1}{N}\sum_{[1\leq i\leq k]}\sum_{[k+1\leq j\leq N]}\log\left(\frac{u_i^m-x_j^m}{x_i^m-x_j^m}\frac{x_i-x_j}{u_i-x_j}\right).
\end{eqnarray*}
When $k$ is fixed as $N\rightarrow\infty$, we have 
\begin{eqnarray*}
\lim_{N\rightarrow\infty}\sum_{1\leq i<j\leq k}\frac{1}{N}\log\left(\frac{u_i^m-u_j^m}{x_i^m-x_j^m}\frac{x_i-x_j}{u_i-u_j}\right)=0.
\end{eqnarray*}
Then the lemma follows from explicit computations.
\end{proof}

Hence to study the asymptotics of (\ref{an}), it suffices to study the asymptotics of $\frac{1}{N}\log S_1$ and  $\frac{1}{N}\log S_3$, as $N\rightarrow\infty$.
Let
\begin{eqnarray*}
\mu^{(m)}&=&((N-1)m,(N-2)m,\ldots,m,0).
\end{eqnarray*}
Let $\Delta(X)$ (resp.\ $\Delta(W)$) denote the Vandermonde determinant of the variable $X$ (resp.\ $W$). From the well-known formula to compute the Schur function, we obtain
\begin{eqnarray}
s_{\lambda}(W)&=&\frac{\det[e^{(\lambda_i+N-i)\log w_j}]_{1\leq i,j\leq N}}{\Delta(W)},\label{slw}\\
s_{\lambda^{(m)}}(W)&=&\frac{\det[e^{(m(N-i)\log w_j}]_{1\leq i,j\leq N}}{\Delta(W)},\label{slmw}\\
s_{\lambda}(X)&=&\frac{\det[e^{(\lambda_i+N-i)\log x_j}]_{1\leq i,j\leq N}}{\Delta(X)},\notag\\
s_{\lambda^{(m)}}(X)&=&\frac{\det[e^{m(N-i)\log x_j}]_{1\leq i,j\leq N}}{\Delta(X)}.\notag
\end{eqnarray}

\begin{lemma}\label{l62}Suppose that $\lambda\in\GT_N^+$ differs from $\lambda^{(m)}$ by at most $l$ components at the beginning, where $l\geq 1$ is a positive integer. Then
\begin{eqnarray*}
&&\frac{s_{\lambda}(W)}{s_{\lambda^{(m)}}(W)}\\
&=&\sum_{J=j_1<j_2<\ldots<j_l}\left(\prod_{r\in J}\frac{1}{\prod_{s\neq r}(w_r^m-w_s^m)}\right)\sum_{\sigma\in S_l}(-1)^{\sigma}\det\left[w_{j_t}^{m(i-1)+\lambda_{\sigma(t)}+N-\sigma(t)}\right]_{1\leq i,t\leq l}.
\end{eqnarray*}

\end{lemma}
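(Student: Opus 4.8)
The plan is to evaluate both Schur functions through the determinantal formulas (\ref{slw}) and (\ref{slmw}), which give
\begin{eqnarray*}
\frac{s_{\lambda}(W)}{s_{\lambda^{(m)}}(W)}=\frac{\det[w_j^{\lambda_i+N-i}]_{1\leq i,j\leq N}}{\det[w_j^{m(N-i)}]_{1\leq i,j\leq N}}=\frac{\det[w_j^{\lambda_i+N-i}]_{1\leq i,j\leq N}}{\prod_{1\leq i<j\leq N}(w_i^m-w_j^m)},
\end{eqnarray*}
where the last equality records that the denominator, having exponents $m(N-i)$, is a Vandermonde determinant in the variables $w_j^m$. Two structural facts drive the argument: this Vandermonde factorization, and the hypothesis that $\lambda$ agrees with $\lambda^{(m)}$ past the $l$-th entry, so that rows $i=l+1,\ldots,N$ of the numerator matrix carry exactly the staircase exponents $m(N-i)$ and only the top $l$ rows differ. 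This is precisely the configuration that invites a Laplace expansion of the numerator along its first $l$ rows.

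First I would apply the Laplace expansion by complementary minors along rows $\{1,\ldots,l\}$. For each $l$-element column set $J=\{j_1<\cdots<j_l\}\subseteq[N]$ the top minor is $\det[w_{j_t}^{\lambda_i+N-i}]_{1\leq i,t\leq l}$, while the complementary bottom minor uses rows $l+1,\ldots,N$ and the columns $J^c=[N]\setminus J$; since those rows still carry staircase exponents, this minor is again a Vandermonde, equal to $\prod_{r,s\in J^c,\ r<s}(w_r^m-w_s^m)$. Writing the denominator Vandermonde as the product of its $J$-internal, $J^c$-internal, and mixed parts, the $J^c$-internal factor cancels the bottom minor, leaving each term of the sum over $J$ with denominator $\prod_{r,s\in J,\ r<s}(w_r^m-w_s^m)$ times $\prod_{r\in J,\ s\in J^c}(w_r^m-w_s^m)$, up to signs introduced both by the Laplace rule and by reordering the mixed pairs.

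Next I would simplify, independently, the inner sum over $\sigma$ on the right-hand side of the lemma. Writing $w_{j_t}^{m(i-1)+\lambda_{\sigma(t)}+N-\sigma(t)}=(w_{j_t}^m)^{i-1}\,w_{j_t}^{\lambda_{\sigma(t)}+N-\sigma(t)}$ factors each determinant as a $\sigma$-independent Vandermonde times $\prod_t w_{j_t}^{\lambda_{\sigma(t)}+N-\sigma(t)}$, and the signed sum over $\sigma$ reassembles an $l\times l$ determinant through the Leibniz formula:
\begin{eqnarray*}
\sum_{\sigma\in S_l}(-1)^{\sigma}\det\left[w_{j_t}^{m(i-1)+\lambda_{\sigma(t)}+N-\sigma(t)}\right]_{1\leq i,t\leq l}=\left(\prod_{1\leq t<t'\leq l}(w_{j_{t'}}^m-w_{j_t}^m)\right)\det\left[w_{j_t}^{\lambda_i+N-i}\right]_{1\leq i,t\leq l}.
\end{eqnarray*}
Multiplying by the stated prefactor $\prod_{r\in J}\left(\prod_{s\neq r}(w_r^m-w_s^m)\right)^{-1}$ and simplifying the resulting product of Vandermonde factors, whose internal signs cancel, shows that the right-hand side of the lemma equals the sum over $J$ of $\det[w_{j_t}^{\lambda_i+N-i}]_{1\leq i,t\leq l}$ divided by $\left(\prod_{r,s\in J,\ r<s}(w_r^m-w_s^m)\right)\prod_{r\in J,\ s\in J^c}(w_r^m-w_s^m)$, with no residual sign; this is exactly the sign-free form that the Laplace expansion should produce.

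The step I expect to be the main obstacle is the sign bookkeeping needed to match this sign-free sum against the Laplace output. The latter carries, for each $J$, the Laplace sign $(-1)^{l(l+1)/2+\sum_{j\in J}j}$ together with the sign $(-1)^{\#\{(r,s):\,r\in J,\,s\in J^c,\,s<r\}}$ produced when the mixed Vandermonde factor is rewritten with the $J$-index placed first. The identity that closes the argument is the counting relation $\#\{(r,s):\,r\in J,\,s\in J^c,\,s<r\}=\sum_{j\in J}j-\frac{l(l+1)}{2}$, valid because each $j_t\in J$ has exactly $j_t-t$ smaller indices lying outside $J$; substituting it makes the total exponent of $-1$ equal to $2\sum_{j\in J}j$, hence even for every $J$, so all terms survive with a plus sign and the two expressions coincide.
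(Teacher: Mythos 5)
Your proof is correct and takes essentially the same route as the paper's: a Laplace expansion of the numerator determinant along the first $l$ rows, cancellation of the complementary (staircase) minor, which is a Vandermonde in the variables $w_s^m$, $s\in J^c$, against the denominator $\prod_{1\leq i<j\leq N}(w_i^m-w_j^m)$, and recombination of the $J$-internal Vandermonde with the top minor via the Leibniz formula into the signed sum over $S_l$. The only differences are presentational: you run the last step in reverse (simplifying the lemma's right-hand side independently) and you make the sign bookkeeping fully explicit through the counting identity $\#\{(r,s):\,r\in J,\,s\in J^c,\,s<r\}=\sum_{j\in J}j-\frac{l(l+1)}{2}$, whereas the paper absorbs the same signs into per-element factors $(-1)^{r-1}$ and a global $(-1)^{l(l-1)/2}$ without spelling out the count.
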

\begin{proof}By (\ref{slw}), (\ref{slmw}) and Condition II of Theorem \ref{t17}, we obtain
\begin{eqnarray*}
&&\frac{s_{\lambda}(W)}{s_{\lambda^{(m)}}(W)}=\frac{\det[e^{(\lambda_i+N-i)\log w_j}]_{1\leq i,j\leq N}}{\det[e^{m(N-i)\log w_j}]_{1\leq i,j\leq N}}\\
&=&\sum_{J=j_1<j_2<\ldots<j_l}(-1)^{\sum_{t=1}^{l}(j_t+t)}\det[e^{(\lambda_t+N-t)\log w_{j_s}}]_{1\leq t\leq l,1\leq s\leq l}\frac{\det[e^{m(N-t)\log w_s}]_{l+1\leq t\leq N, s\in [N]\setminus J}}{\det[e^{m(N-i)\log w_j}]_{1\leq i,j\leq N}}.
\end{eqnarray*}
For each set $J\subset[N]$ with $|J|=\ell$ we obtain
\begin{eqnarray*}
\frac{\det[e^{m(N-t)\log w_s}]_{l+1\leq t\leq N, s\in [N]\setminus J}}{\det[e^{m(N-i)\log w_j}]_{1\leq i,j\leq N}}
&=&\frac{\prod_{i,j\in\{ [N]\setminus J\},i<j}(w_i^m-w_j^m)}{\prod_{1\leq i<j\leq N}(w_i^m-w_j^m)}\\
&=&\frac{1}{\left[\prod_{i\in J,j>i}(w_i^m-w_j^m)\right]\left[\prod_{j\in J, i\in [N]\setminus J,i<j}(w_i^m-w_j^m)\right]}\\
&=&\prod_{i<j,i,j\in J}(w_i^m-w_j^m)\prod_{r\in J}\frac{(-1)^{r-1}}{\prod_{s\neq r}(w_r^m-w_s^m)}.
\end{eqnarray*}
Moreover, let $S_l$ be the symmetric group of $l$ elements,
\begin{eqnarray*}
&&\left[\prod_{i<j,i,j\in J}(w_i^m-w_j^m)\right]\det[e^{(\lambda_t+N-t)\log w_{j_s}}]_{1\leq t\leq l,1\leq s\leq l}\\
&=&(-1)^{\frac{l(l-1)}{2}}\det\left[w_{j_t}^{m(i-1)}\right]_{1\leq i,t\leq l}\left(\sum_{\sigma\in S_l}(-1)^{\sigma}\prod_{t=1}^{l}w_{j_{\sigma(t)}}^{\lambda_t+N-t}\right)\\
&=&(-1)^{\frac{l(l-1)}{2}}\left[\sum_{\sigma\in S_l}(-1)^{\sigma}\det\left[w_{j_t}^{m(i-1)+\lambda_{\sigma(t)}+N-\sigma(t)}\right]_{1\leq i,t\leq l}\right].
\end{eqnarray*}
Then the lemma follows.
\end{proof}

Now we discuss the asymptotics of $\frac{1}{N}\log\frac{s_{\lambda}(W)}{s_{\lambda^{(m)}}(W)}$ as $N\rightarrow\infty$, when $\lambda$ differs from $\lambda^{(m)}$ only in the first component. The proof is inspired by \cite{GP15}.
 Let $\mathbf{i}$ be the imaginary unit satisfying $\mathbf{i}^2=-1$.

\begin{lemma}\label{l63}Assume the assumptions of Lemma \ref{l62} hold with $l=1$. Then
\begin{eqnarray}
\frac{s_{\lambda}(W)}{s_{\lambda^{(m)}}(W)}=
\frac{1}{2\pi\mathbf{i}}\oint_{C} \frac{mz^{\lambda_1+N-1+m-1}}{\prod_{i=1}^{N}(z^m-w_i^m)}dz.\label{ig}
\end{eqnarray}
Here the contour $C$ encloses only the poles of the integrand at $z=w_i$, $i=1,\ldots,N$.
\end{lemma}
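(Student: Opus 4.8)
The plan is to obtain the contour integral (\ref{ig}) directly from the determinantal formula of Lemma \ref{l62} by recognizing the resulting finite sum as a sum of residues. First I would specialize Lemma \ref{l62} to $l=1$: the symmetric group $S_1$ is trivial, each determinant is a single entry with row index $i=1$ (so the exponent $m(i-1)$ vanishes), and the outer sum runs over singletons $J=\{r\}$. This collapses the statement of Lemma \ref{l62} to
\begin{eqnarray*}
\frac{s_{\lambda}(W)}{s_{\lambda^{(m)}}(W)}=\sum_{r=1}^{N}\frac{w_r^{\lambda_1+N-1}}{\prod_{s\neq r}(w_r^m-w_s^m)}.
\end{eqnarray*}
Thus the lemma reduces to showing that this sum equals the stated integral, where throughout I take the $w_i$ to be pairwise distinct, as is already required for the denominators in Lemma \ref{l62} to be defined.

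Next I would compute the residues of the integrand
\begin{eqnarray*}
f(z)=\frac{mz^{\lambda_1+N-1+m-1}}{\prod_{i=1}^{N}(z^m-w_i^m)}
\end{eqnarray*}
at the enclosed poles $z=w_r$. Each factor $z^m-w_r^m$ has a simple zero at $z=w_r$ with derivative $mw_r^{m-1}$, so the key cancellation is that the explicit factor $m$ in the numerator cancels against this derivative while exactly $m-1$ powers of $w_r$ cancel the surplus in the exponent:
\begin{eqnarray*}
\mathrm{Res}_{z=w_r}f(z)=\frac{mw_r^{\lambda_1+N-1+m-1}}{mw_r^{m-1}\prod_{s\neq r}(w_r^m-w_s^m)}=\frac{w_r^{\lambda_1+N-1}}{\prod_{s\neq r}(w_r^m-w_s^m)},
\end{eqnarray*}
which is precisely the $r$-th summand displayed above.

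Finally I would invoke the residue theorem. By hypothesis the contour $C$ encloses only the poles $z=w_1,\ldots,w_N$, while the remaining zeros of the denominator, namely $z=w_i\zeta^t$ for $\zeta$ a primitive $m$-th root of unity and $t\in\{1,\ldots,m-1\}$, lie outside $C$ and contribute nothing. Summing the residues over $r$ and multiplying by $\frac{1}{2\pi\mathbf{i}}$ then reproduces the finite sum, establishing (\ref{ig}). I would also remark that, although the combinatorial sum requires distinct $w_i$, the integral on the right of (\ref{ig}) is holomorphic in the $w_i$ even when some coincide (a coincidence merely merges simple poles into a higher-order one), so (\ref{ig}) furnishes the natural analytic continuation to the degenerate configurations arising in Assumption \ref{ap1}.

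The computation is essentially routine; the only point demanding care is the bookkeeping of the exponent $\lambda_1+N-1+m-1$ in the numerator, which is engineered precisely so that the residue at each $w_r$ strips off the factor $mw_r^{m-1}$ and returns $w_r^{\lambda_1+N-1}$. The secondary subtlety, namely separating each pole $w_r$ from its rotated companions $w_r\zeta^t$, is handled by the stated choice of $C$, which in applications may be realized as a small union of circles around the $w_r$ once these are known to be disjoint from the other $m$-th roots.
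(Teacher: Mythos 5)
Your proof is correct and follows essentially the same route as the paper: specialize Lemma \ref{l62} to $l=1$ to get $\sum_{r=1}^{N} w_r^{\lambda_1+N-1}/\prod_{s\neq r}(w_r^m-w_s^m)$, then recognize this as the sum of residues of the integrand at the poles $z=w_r$ and apply the residue theorem. Your explicit residue bookkeeping (cancelling $mw_r^{m-1}$ against the exponent $\lambda_1+N-1+m-1$), the observation that $C$ must exclude the rotated roots $w_i\zeta^t$, and the analytic-continuation remark for coincident $w_i$ are all details the paper leaves implicit, and they are handled correctly.
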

\begin{proof}By Lemma \ref{l62}, when $l=1$ we obtain
\begin{eqnarray}
&&\frac{s_{\lambda}(W)}{s_{\lambda^{(m)}}(W)}
=\sum_{j=1}^{N}\frac{w_{j}^{\lambda_1+N-1}}{\prod_{s\neq j}(w_j^m-w_s^m)}.\label{se}
\end{eqnarray}
The right hand side of (\ref{se}) is exactly the sum of residues of the integrand of (\ref{ig}) at all the poles $z=w_i$. Then the lemma follows.
\end{proof}

Let $\xi=z^m$ and 
\begin{eqnarray}
y=\frac{\lambda_1+N-1}{mN}. \label{dy}
\end{eqnarray}
Then (\ref{ig}) becomes
\begin{eqnarray}
\frac{s_{\lambda}(W)}{s_{\lambda^{(m)}}(W)}=
\frac{1}{2\pi\mathbf{i}}\oint_{C} \frac{\xi^{Ny}}{\prod_{i=1}^{N}(\xi-w_i^m)}d\xi,\label{sds}
\end{eqnarray}
where the contour $C$ encloses all the singularities of the integrand.
The goal is to analyze the asymptotics of (\ref{sds}). 

Suppose that Assumption \ref{ap1} holds. Define a piecewise continuous, decreasing function $f: [0,1)\rightarrow [x_n^m,x_1^m]$ as follows
\begin{eqnarray}
f(y)=x_i^m,\ \mathrm{if}\ y\in \left[\sum_{j=1}^{i-1}\gamma_j,\sum_{j=1}^{i}\gamma_j\right),\label{fy}
\end{eqnarray}
where $i\in [n]$. Then it is straightforward to check the following lemma concerning $f$:

\begin{lemma}\label{ap66}Let $\sigma$ be a permutation of $[N]$ such that
\begin{eqnarray*}
x_{\sigma(1)}\geq x_{\sigma(2)}\geq\ldots\geq x_{\sigma(N)}.
\end{eqnarray*}
Let $w_1,\ldots,w_N$ be defined as in (\ref{wi}), and let
\begin{eqnarray*}
\hat{w}_i=w_{\sigma(i)}^m,\qquad \mathrm{for}\ i\in[N].
\end{eqnarray*}
Let $k$, $m$ be fixed as $N\rightarrow\infty$, and assume that each one of $u_1,\ldots,u_k$ is in a fixed compact neighborhood of $x_1,\ldots,x_k$, respectively. Let $f$ be the function defined as in (\ref{fy}), then
\begin{eqnarray*}
R_1(w,f)=\sum_{i=1}^{N}\left|\hat{w}_{i}-f\left(\frac{i}{N}\right)\right|\qquad\qquad R_{\infty}(w,f)=\sup_{1\leq i\leq N}\left|\hat{w}_{i}-f\left(\frac{i}{N}\right)\right|
\end{eqnarray*}
satisfy that $R_{\infty}(w,f)$ is bounded and $\frac{R_1(w,f)}{N}\sim O\left(\frac{1}{N}\right)\rightarrow 0$ as $N\rightarrow\infty$.
\end{lemma}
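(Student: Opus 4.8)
The plan is to sandwich the sorted sequence $\hat w_1,\ldots,\hat w_N$ between an intermediate ``unperturbed'' sequence built only from the $x_i$'s and the limiting step function $f$, and estimate each comparison separately. First I would dispose of $R_\infty$, which is the easy half: each $\hat w_i$ equals either $x_j^m$ for some $j\in[n]$ or $u_j^m$ for some $j\in[k]$, while each value $f(i/N)$ equals some $x_j^m$. Since the $x_j$ are fixed positive reals and each $u_j$ ranges over a fixed compact neighborhood of $x_j$ (so $u_j^m$ is uniformly bounded), all of these quantities lie in one bounded subset of $\CC$ independent of $N$ and $i$. Hence $|\hat w_i-f(i/N)|\le C$ for a constant $C$, which gives that $R_\infty(w,f)\le C$ is bounded.

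For $R_1$ I would introduce the auxiliary sequence $\tilde w_i:=x_{\sigma(i)}^m$, the sorted $m$-th-power sequence obtained from the $x$-values alone. Because $\sigma$ orders $x_{\sigma(1)}\ge\cdots\ge x_{\sigma(N)}$ and the distinct values are $x_1>\cdots>x_n$ with multiplicities $K_j:=K_j^{(N)}$, the sequence $\tilde w$ is constant equal to $x_j^m$ on the block $i\in(G_{j-1},G_j]$, where $G_j:=\sum_{l\le j}K_l$ and $G_0:=0$. On the other hand $f(i/N)=x_j^m$ exactly on the block $(\Gamma_{j-1},\Gamma_j]$ with $\Gamma_j:=N\sum_{l\le j}\gamma_l$. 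Thus $\tilde w_i=f(i/N)$ for every index outside the symmetric differences of these blocks, and the number of disagreeing indices is controlled by the interior boundary discrepancies $|G_j-\Gamma_j|=\bigl|\sum_{l\le j}(K_l-\gamma_l N)\bigr|$ for $j=1,\ldots,n-1$ (the outer boundaries match, as $G_0=\Gamma_0=0$ and $G_n=\Gamma_n=N$). By Assumption \ref{ap1} we have $K_l-\gamma_l N=o(N)$ for each $l$, hence each $|G_j-\Gamma_j|=o(N)$; since each disagreeing term is bounded by $C$ and there are only finitely many ($n$) boundaries, $\sum_i|\tilde w_i-f(i/N)|=o(N)$.

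It then remains to absorb the $k$ perturbations. The sequences $\hat w$ and $\tilde w$ differ only at indices $i$ with $\sigma(i)\in[k]$, where $\hat w_i=u_{\sigma(i)}^m$ while $\tilde w_i=x_{\sigma(i)}^m$; there are at most $k$ such indices and each contributes at most $C$, so $\sum_i|\hat w_i-\tilde w_i|\le kC=O(1)$. By the triangle inequality, $R_1(w,f)\le\sum_i|\hat w_i-\tilde w_i|+\sum_i|\tilde w_i-f(i/N)|=o(N)$, whence $R_1(w,f)/N\to0$.

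The only genuinely delicate point is the claimed rate. The displayed assertion $R_1/N\sim O(1/N)$ amounts to $R_1=O(1)$, which needs the sharper quantitative control $K_j^{(N)}=\gamma_j N+O(1)$ on the multiplicities rather than the bare convergence $K_j^{(N)}/N\to\gamma_j$ furnished by Assumption \ref{ap1}. This stronger rate does hold in the periodic setting $\gamma_j=1/n$ relevant to Theorem \ref{ad}, where $K_j^{(N)}=N/n+O(1)$ forces each $|G_j-\Gamma_j|=O(1)$ and hence $R_1=O(1)$. Under Assumption \ref{ap1} alone the argument above yields only $R_1=o(N)$, which is nevertheless all that the subsequent asymptotic analysis of the integral (\ref{sds}) requires; reconciling this gap (or simply reading the hypothesis as the $O(1)$-rate version) is the main thing to get right.
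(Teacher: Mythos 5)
Your proof is correct, and it is worth noting that the paper offers no proof at all here: Lemma \ref{ap66} is introduced with ``it is straightforward to check,'' so your block-comparison argument (sandwiching $\hat w$ between the sorted unperturbed sequence $\tilde w_i=x_{\sigma(i)}^m$ and the step function $f$, with the $k$ perturbed entries contributing $O(1)$ and the block-boundary mismatches $|G_j-\Gamma_j|$ carrying the rest) supplies exactly the argument the author omitted. Your diagnosis of the rate is also the right one to flag: under Assumption \ref{ap1} alone, which gives only $K_j^{(N)}=\gamma_jN+o(N)$, the stated conclusion $R_1(w,f)=O(1)$ is genuinely false (take $n=2$, $\gamma_1=\gamma_2=\tfrac12$, $K_1^{(N)}=\lfloor N/2+\sqrt N\rfloor$, which yields $R_1\asymp\sqrt N$), so the lemma must implicitly be read with the quantitative hypothesis $K_j^{(N)}=\gamma_jN+O(1)$, as indeed holds in the periodic setting the paper cares about. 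Two minor points: $f$ is defined on $[0,1)$, so $f(N/N)=f(1)$ in the statement requires the harmless extension $f(1):=x_n^m$; and your closing remark that $R_1=o(N)$ ``is all the subsequent analysis requires'' is slightly too optimistic as the paper is written. In Proposition \ref{p28} the contribution of $T_1$ enters as $e^{T_1}$ against the decay $e^{-N\delta(\epsilon)}$ with $\epsilon\sim N^{-\alpha}$ and $N\delta(\epsilon)\sim N^{1-2\alpha}$, $\tfrac13<\alpha<\tfrac12$; a bound $R_1=o(N)$ as weak as $N/\log N$ would overwhelm $N^{1-2\alpha}$, so with the paper's parameter choices one really needs $R_1=o(N^{1-2\alpha})$ (or a fixed small $\epsilon$ and a reworked treatment of $I_1$), whereas under the $O(1)$-rate reading of the hypothesis the issue disappears. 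Apart from calibrating that last claim, your argument is complete and sound.
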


Define
\begin{eqnarray*}
F(\xi;f)=\int_0^1\log\left[\xi-f(t)\right]dt\qquad \xi\in \mathbb{C}\setminus \{f(t)| t\in [0,1]\}.
\end{eqnarray*}
Then by (\ref{sds}) we obtain
\begin{eqnarray}
\frac{s_{\lambda}(W)}{s_{\lambda^{(m)}}(W)}=\frac{1}{2\pi\mathbf{i}}\oint_{C} e^{N(y\log \xi-F(\xi, f))}\cdot Q(\xi,\lambda, f)d\xi\label{qlm};
\end{eqnarray}
where
\begin{eqnarray}
Q(\xi,\lambda, f)=\frac{e^{NF(\xi, f)}}{\prod_{i=1}^{N}(\xi-w_i^m)}.\label{qx}
\end{eqnarray}

\begin{lemma}\label{ll27}Let $f$ be defined as in (\ref{fy}). Let $k$, $m$ be fixed as $N\rightarrow\infty$, and assume that each one of $u_1,\ldots,u_k$ is in a fixed compact neighborhood of $x_1,\ldots,x_k$, respectively. Let $A$ be the smallest connected, convex region in $\CC$ containing all the points $\{f(t):0\leq t\leq 1\}$ and $\{\hat{w}_i:1\leq i\leq N\}$. Then, by Lemma \ref{ap66} as $N\rightarrow\infty$,
\begin{eqnarray*}
\log |Q(\xi,\lambda, f)|\leq O(1)\left(1+\sup_{a\in A}|\ln(\xi-a)|+\sup_{a\in A}\left|\frac{1}{\xi-a}\right|\right).
\end{eqnarray*}
\end{lemma}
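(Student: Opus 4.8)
The plan is to write $\log Q(\xi,\lambda,f)$ as a sum of two error terms and to estimate each one separately. Since $\prod_{i=1}^N(\xi-w_i^m)=\prod_{i=1}^N(\xi-\hat w_i)$ is invariant under reordering of the factors, we have $\log Q(\xi,\lambda,f)=NF(\xi,f)-\sum_{i=1}^N\log(\xi-\hat w_i)=T_1+T_2$, where
\[
T_1=N\int_0^1\log(\xi-f(t))\,dt-\sum_{i=1}^N\log\left(\xi-f\left(\frac iN\right)\right),\qquad T_2=\sum_{i=1}^N\left[\log\left(\xi-f\left(\frac iN\right)\right)-\log(\xi-\hat w_i)\right].
\]
Here $T_1$ is the error between the integral $NF(\xi,f)$ and its Riemann sum, and $T_2$ measures the discrepancy between the sample points $f(i/N)$ and the actual values $\hat w_i$. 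Because $A$ is convex and, in the regime of interest, $\xi\notin A$, the set $\xi-A$ lies in a half-plane avoiding the origin, so a single-valued holomorphic branch of $a\mapsto\log(\xi-a)$ exists on a neighborhood of $A$; I would fix this branch throughout so that the two displays are consistent and the integral representations used below are legitimate.

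First I would bound $T_1$. Because $f$ is the step function (\ref{fy}) taking the value $x_j^m$ on an interval of length $\gamma_j$, the integrand $t\mapsto\log(\xi-f(t))$ is piecewise constant with $n$ pieces. Evaluating the integral and counting how many indices $i$ satisfy $i/N\in[\sum_{l<j}\gamma_l,\sum_{l\le j}\gamma_l)$ — a count that differs from $N\gamma_j$ by at most a bounded constant arising from rounding at the $n$ jump points — gives $|T_1|\le C\sum_{j=1}^n|\log(\xi-x_j^m)|$. Since each $x_j^m\in A$ and $n$ is fixed as $N\to\infty$, this yields $|T_1|\le O(1)\sup_{a\in A}|\log(\xi-a)|$.

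Next I would bound $T_2$ termwise. For each $i$, both $f(i/N)$ and $\hat w_i$ lie in $A$, so the straight segment joining them lies in $A$ by convexity; integrating $\frac{d}{da}\log(\xi-a)=-\frac{1}{\xi-a}$ along this segment gives
\[
\left|\log\left(\xi-f\left(\frac iN\right)\right)-\log(\xi-\hat w_i)\right|\le\left|\hat w_i-f\left(\frac iN\right)\right|\sup_{a\in A}\left|\frac{1}{\xi-a}\right|.
\]
Summing over $i$ and recalling $R_1(w,f)=\sum_{i=1}^N|\hat w_i-f(i/N)|$, I obtain $|T_2|\le R_1(w,f)\sup_{a\in A}|1/(\xi-a)|$. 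By Lemma \ref{ap66} we have $R_1(w,f)/N=O(1/N)$, hence $R_1(w,f)=O(1)$, so $|T_2|\le O(1)\sup_{a\in A}|1/(\xi-a)|$. Finally, since $\log|Q|=\mathrm{Re}(\log Q)\le|T_1|+|T_2|$, adding the two bounds and absorbing constants into the leading $O(1)(1+\cdots)$ produces the stated inequality.

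The hard part will be the bookkeeping around the branch of the logarithm: one must check that $F(\xi,f)$, the Riemann sum in $T_1$, and the per-term differences in $T_2$ are all computed with one and the same single-valued branch, so that each difference in $T_2$ genuinely equals the claimed segment integral with no spurious multiple of $2\pi\mathbf{i}$ appearing. The convexity of $A$ together with $\xi\notin A$ is precisely what secures this, since it confines every $\xi-a$ to a common half-plane. Once this is settled, the two remaining estimates — the Riemann-sum error for a step function and the mean-value bound controlled by the $R_1$ quantity of Lemma \ref{ap66} — are routine.
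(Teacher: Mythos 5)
Your proposal is correct and follows essentially the same route as the paper: the same decomposition of $NF(\xi,f)-\sum_i\log(\xi-\hat w_i)$ into a Riemann-sum error for the step function $f$ (bounded via the $O(1)$ rounding at the $n$ jump points) plus a discrepancy term bounded by the segment integral of $1/(\xi-a)$ over $A$ together with $R_1(w,f)=O(1)$ from Lemma \ref{ap66}, with only the labels of the two terms swapped relative to the paper. Your additional care about fixing a single branch of $\log(\xi-a)$ on the convex set $A$ is a point the paper passes over silently, but it does not change the argument.
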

\begin{proof}Let $\mathrm{Re}[z]$ denote the real part of a complex number $z\in \CC$.  From (\ref{qx}) we obtain
\begin{eqnarray*}
\log |Q(\xi,\lambda, f)|&=&\mathrm{Re}\left[\log Q(\xi,\lambda,f)\right]\\
&= & \mathrm{Re}\left[NF(\xi,f)-\sum_{i=1}^N\log (\xi-w_i^m)\right]\\
&\leq & \left|NF(\xi,f)-\sum_{i=1}^N\log (\xi-\hat{w}_i)\right|\\
&\leq & T_1+T_2.
\end{eqnarray*}
where
\begin{eqnarray*}
T_1&=&\left|\sum_{j=1}^{N}\log\left(\xi-\hat{w}_j\right)-\sum_{j=1}^{N}\log\left(\xi-f\left(\frac{j}{N}\right)\right)\right|.
\end{eqnarray*}
and 
\begin{eqnarray*}
T_2&=&\left|NF(\xi,f)-\sum_{j=1}^{N}\log\left(\xi-f\left(\frac{j}{N}\right)\right)\right|.
\end{eqnarray*}

The rest of the proof is devoted to give upper bounds to $T_1$ and $T_2$. Note that
\begin{eqnarray*}
T_1\leq \sum_{j=1}^{N}\left|\int_{\left[\hat{w}_j,f\left(\frac{j}{N}\right)\right]}\frac{dt}{\xi-t}\right|\leq \sup_{a\in A}\left|\frac{1}{\xi-a}\right|\left(\sum_{j=1}^{N}\left|f\left(\frac{j}{N}\right)-\hat{w}_j\right|\right).
\end{eqnarray*}
where $\left[\hat{w}_j,f\left(\frac{j}{N}\right)\right]$ is the line segment in $A$ starting from $\hat{w}_j$ and ending at $f\left(\frac{j}{N}\right)$. By Lemma \ref{ap66} we have
\begin{eqnarray*}
T_1\leq \sup_{a\in A}\left|\frac{1}{\xi-a}\right|O\left(1\right).
\end{eqnarray*}
Moreover,
\begin{eqnarray*}
T_2&=&\left|\sum_{j=1}^{N}\log\left(\xi-f\left(\frac{j}{N}\right)\right)-N\int_0^1\log\left(\xi-f\left(t\right)\right)dt\right|\\
&\leq&N\sum_{j=1}^{N}\sup_{\frac{j-1}{N}\leq t,s\leq \frac{j}{N}}\left|\frac{\log(\xi-f(t))-\log(\xi-f(s))}{N}\right|\\
&\leq& O(1)\left(1+\sup_{a\in A}\log|\xi-a|\right),
\end{eqnarray*}
where the last inequality follows from the definition of $f$ as in (\ref{fy}). Then the lemma follows.
\end{proof}

We shall analyze the asymptotics of the integral (\ref{sds}) by the steepest descent method; see also \cite{er56,cet65}. We will deform the contour to pass through the critical point of $y
\log \xi-F(\xi;f)$. The critical point satisfies the equation
\begin{eqnarray*}
0=\frac{d[y\log \xi-F(\xi;f)]}{d\xi}=\frac{y}{\xi}-\int_0^1\frac{dt}{\xi-f(t)}.
\end{eqnarray*}

\begin{lemma}\label{l28}Assume $f(t)>0$ for $t\in [0,1]$ and is decreasing in [0,1]. Then for any $y\in \RR\setminus \{1\}$, there exists a unique $\xi_0\in \RR$, such that 
\begin{eqnarray}
y-\int_0^1\frac{\xi_0}{\xi_0-f(t)}dt=0.\label{yme}
\end{eqnarray}
Moreover,  when $y>1$, $\xi_0>f(0)$.
\end{lemma}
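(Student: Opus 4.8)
The plan is to reduce equation (\ref{yme}) to the monotonicity of one real function. Write
\[
g(\xi):=\int_0^1\frac{\xi}{\xi-f(t)}\,dt=1+\int_0^1\frac{f(t)}{\xi-f(t)}\,dt,
\]
so that (\ref{yme}) becomes $g(\xi_0)=y$; the second form isolates the constant $1$ and already explains why $y=1$ is excluded. For real $\xi$ lying off the range of $f$, differentiating under the integral sign gives
\[
g'(\xi)=-\int_0^1\frac{f(t)}{(\xi-f(t))^2}\,dt<0,
\]
the strict inequality being exactly where the hypothesis $f(t)>0$ enters. Hence $g$ is continuous and strictly decreasing on each interval of its domain, so $g(\xi)=y$ has at most one solution on any such interval.

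Next I would locate the root that matters for the later steepest-descent deformation, namely the one outside the range of $f$, treating $y>1$ first. Restrict to $\xi>f(0)$; since $f$ is decreasing, $f(0)=\sup_t f(t)$, so $\xi-f(t)\ge\xi-f(0)>0$ and $g$ is finite, smooth and strictly decreasing there. As $\xi\to+\infty$ the integrand $f(t)/(\xi-f(t))$ tends to $0$, whence $g(\xi)\to1^+$; as $\xi\downarrow f(0)$, monotone convergence sends $\int_0^1 f(t)/(\xi-f(t))\,dt$ to the divergent integral $\int_0^1 f(t)/(f(0)-f(t))\,dt=+\infty$, because the top level set $\{t:f(t)=f(0)\}$ is an interval of positive length, so $g(\xi)\to+\infty$. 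Therefore $g$ is a strictly decreasing bijection of $(f(0),\infty)$ onto $(1,\infty)$, and for each $y>1$ the intermediate value theorem yields a unique $\xi_0\in(f(0),\infty)$ with $g(\xi_0)=y$. This simultaneously gives existence and the bound $\xi_0>f(0)$ of the ``moreover'' clause. The case $y<1$ is symmetric on the other unbounded component $(-\infty,\inf_t f(t))$, where $g(\xi)\to1^-$ as $\xi\to-\infty$ and $g(\xi)\to-\infty$ as $\xi$ approaches $\inf_t f(t)$ from below, producing a unique root there.

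The step I expect to be the most delicate is the global uniqueness over all of $\RR$. Because $g$ has a singularity at each value $x_i^m$ attained by the step function $f$, it is only piecewise monotone; between two consecutive values $g$ sweeps from $+\infty$ down to $-\infty$, so the substantive assertion is that the critical point selected by the contour is the one lying outside $[\inf_t f(t),\,f(0)]$, where the two limit computations above confine it to the correct side of the range according to the sign of $y-1$. I would therefore phrase uniqueness relative to this unbounded region; note that for the continuous strictly decreasing idealization of $f$ the defining integral diverges throughout the range, so $g$ lives only on the two unbounded intervals and uniqueness on $\RR$ becomes literal. The residual tasks, namely justifying differentiation under the integral sign and the two boundary limits, are routine and follow from $f$ being bounded with $\inf_t f(t)>0$ together with dominated and monotone convergence.
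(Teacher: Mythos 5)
Your argument is correct and essentially identical to the paper's: the paper likewise reduces (\ref{yme}) to strict monotonicity of $g(\xi)=y-\int_0^1 \frac{\xi}{\xi-f(t)}\,dt$ (your $g$ with the sign flipped, using $f(t)>0$ for the derivative sign), computes the limits $y-1$ as $\xi\to\pm\infty$ and the blow-ups as $\xi\uparrow f(1)$ and $\xi\downarrow f(0)$, and concludes by the intermediate value theorem that for $y>1$ the root lies in $(f(0),\infty)$. Your caveat about global uniqueness matches the paper exactly --- its proof also only establishes a unique root in $(-\infty,f(1))\cup(f(0),\infty)$, so the lemma's ``unique $\xi_0\in\RR$'' must be read relative to that unbounded region for the step function (\ref{fy}) --- and your remark that the divergence as $\xi\downarrow f(0)$ uses the positive measure of the top level set correctly fills in a point the paper dismisses as straightforward.
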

\begin{proof}Let
\begin{eqnarray*}
g(\xi):=y-\int_0^1\frac{\xi}{\xi-f(t)}dt.
\end{eqnarray*}
Then
\begin{eqnarray*}
g'(\xi)=\int_0^1\frac{f(t)}{[\xi-f(t)]^2}dt>0
\end{eqnarray*}
under the assumption that $f(t)>0$ for $t\in [0,1]$. 

Note also that 
\begin{eqnarray*}
\lim_{\xi\rightarrow -\infty}g(\xi)=\lim_{\xi\rightarrow\infty}g(\xi)=y-1;
\end{eqnarray*}
when $\xi$ increases from $-\infty$ to $f(1)$, $g(\xi)$ increases from $y-1$ to $+\infty$, and when $\xi$ increases from $f(0)$ to $\infty$, $g(\xi)$ increases from $-\infty$ to $y-1$. Therefore for any $y\neq 1$, there exists a unique $\xi\in (-\infty, f(1))\cup (f(0),\infty)$, such that the identity (\ref{yme}) holds.

It is straight forward to check that when $y>1$, $\xi_0>f(0)$. Then the lemma follows.
\end{proof}


When $y> 1$, let $\xi_0$ be the unique real solution such that (\ref{yme}) holds. Let $C'$ be the counterclockwise circle centered at 0 and passing through $\xi_0$. By Lemma \ref{l28}, $C'$ encloses all the singularities of the integrand of (\ref{sds}), since all the singularities lies on the interval $[f(0),f(1)]$ of the real line; see Figure \ref{fig:sdm}.
\begin{figure}
  \centering
  \includegraphics[width=7cm]{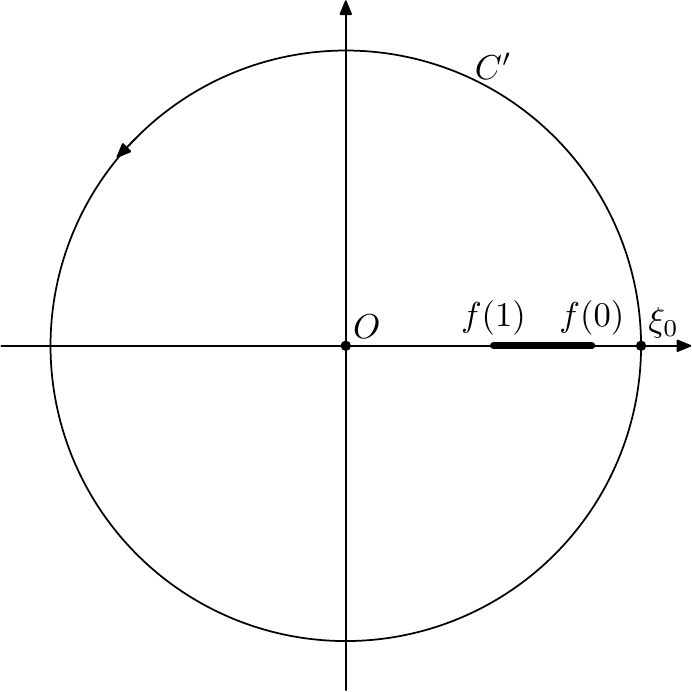}
  \caption{}
    \label{fig:sdm}
\end{figure}

Hence we have
\begin{eqnarray}
\oint_{C} \frac{\xi^{Ny}}{\prod_{i=1}^{N}(\xi-w_i^m)}d\xi=\oint_{C'} \frac{\xi^{Ny}}{\prod_{i=1}^{N}(\xi-w_i^m)}d\xi.\label{cep}
\end{eqnarray}
Moreover, for $\xi\in C'$ and $\xi\neq \xi_0$, we have
\begin{eqnarray*}
&&\mathrm{Re}[y\log\xi-F(\xi;f)]=y\log|\xi|-\int_0^1\log |\xi-f(t)|dt\\
&<& y\log\xi_0-\int_0^{1}\log |\xi_0-f(t)|dt= \mathrm{Re}[y\log\xi_0-F(\xi_0;f)].
\end{eqnarray*}
See Figure \ref{fig:sdm} for why $ \log|\xi-f(t)|>\log|\xi_0-f(t)|$ for all $t\in[0,1]$. We have the following lemma.
\begin{lemma}\label{ll29}Let $C'$ be the counterclockwise circle centered at 0 and passing through $\xi_0$. Let $\xi\in C'$. Let $\log$ denote the branch of the complex logarithmic function defined on $\mathbb{C}\setminus (-\infty,0]$ with $\log 1=0$. Let 
\begin{eqnarray}
\eta&=&\log \xi; \label{etx}\\
\eta_0&=&\log \xi_0. \label{etx0}
\end{eqnarray}
When $|\eta-\eta_0|\geq\epsilon>0$, $|\mathrm{Re}[F(\xi,f)-F(\xi_0,f)]|\geq \delta(\epsilon)$, where
\begin{eqnarray*}
\delta(\epsilon)=\frac{1}{2}\log\left(1+\frac{2f(1)\xi_0(1-\cos\epsilon)}{[\xi_0-f(1)]^2}\right).
\end{eqnarray*}
\end{lemma}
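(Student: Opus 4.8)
The plan is to reduce the statement to an elementary inequality by parametrizing the circle $C'$ and computing the real part of $F$ explicitly. Since $C'$ is centered at the origin and passes through the real point $\xi_0>f(0)>0$ (recall $\xi_0>f(0)$ from Lemma \ref{l28} since $y>1$), it has radius $\xi_0$, so every $\xi\in C'$ can be written as $\xi=\xi_0 e^{\mathbf{i}\theta}$ with $\theta\in(-\pi,\pi)$ (the value $\theta=\pm\pi$ being excluded as it lies on the branch cut $(-\infty,0]$). With the principal branch, $\eta=\log\xi=\log\xi_0+\mathbf{i}\theta=\eta_0+\mathbf{i}\theta$, so that $|\eta-\eta_0|=|\theta|$; hence the hypothesis $|\eta-\eta_0|\geq\epsilon$ translates exactly into $|\theta|\geq\epsilon$ (and the assertion is vacuous unless $\epsilon\le\pi$).

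Next I would pass to real parts. Because $\mathrm{Re}\log z=\log|z|$, independently of the branch, we have $\mathrm{Re}[F(\xi,f)-F(\xi_0,f)]=\int_0^1\log\frac{|\xi-f(t)|}{|\xi_0-f(t)|}\,dt$. Using $|\xi-f(t)|^2=\xi_0^2-2\xi_0 f(t)\cos\theta+f(t)^2$ and $|\xi_0-f(t)|^2=(\xi_0-f(t))^2$, a direct computation gives the identity
\[
\frac{|\xi-f(t)|^2}{|\xi_0-f(t)|^2}=1+\frac{2\xi_0 f(t)(1-\cos\theta)}{(\xi_0-f(t))^2}.
\]
Since $f(t)>0$, $\xi_0>f(0)\geq f(t)$, and $1-\cos\theta\geq0$, the right-hand side is $\geq1$, so the integrand is nonnegative and $\mathrm{Re}[F(\xi,f)-F(\xi_0,f)]\geq0$; in particular its absolute value equals itself, which disposes of the absolute-value sign in the target.

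It then remains to bound the integrand from below uniformly in $t$. Fixing $\xi$ (hence $\theta$) and regarding the bracketed quantity as a function of $a=f(t)\in(0,\xi_0)$, the key monotonicity fact is
\[
\frac{d}{da}\frac{a}{(\xi_0-a)^2}=\frac{\xi_0+a}{(\xi_0-a)^3}>0\qquad\text{on }(0,\xi_0),
\]
so the fraction increases with $a$. Since $f$ is decreasing, $f(t)\geq f(1)$ for all $t$, and therefore the integrand is minimized at $f(t)=f(1)$. Combined with $1-\cos\theta\geq1-\cos\epsilon$, which follows from $|\theta|\geq\epsilon$ and the monotonicity of $1-\cos$ on $[0,\pi]$, this yields the pointwise bound
\[
\frac12\log\frac{|\xi-f(t)|^2}{|\xi_0-f(t)|^2}\geq\frac12\log\left(1+\frac{2\xi_0 f(1)(1-\cos\epsilon)}{(\xi_0-f(1))^2}\right)=\delta(\epsilon)
\]
for every $t$. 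Integrating over $[0,1]$ gives $\mathrm{Re}[F(\xi,f)-F(\xi_0,f)]\geq\delta(\epsilon)$, which is the claim. The only delicate points are the bookkeeping of the branch of the logarithm (handled by passing to $\mathrm{Re}\log=\log|\cdot|$) and checking that the worst case over $t$ is the smallest value $f(1)$ of $f$; both are routine once the explicit formula above is in hand, so I do not anticipate any substantial obstacle.
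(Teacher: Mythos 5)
Your proposal is correct and follows essentially the same route as the paper's proof: parametrize $\xi=\xi_0 e^{\mathbf{i}\theta}$, reduce $\mathrm{Re}[F(\xi,f)-F(\xi_0,f)]$ to $\frac{1}{2}\int_0^1\log\bigl(1+\frac{2\xi_0 f(t)(1-\cos\theta)}{(\xi_0-f(t))^2}\bigr)dt$, and bound the integrand below by its value at $f(1)$ and $\theta=\epsilon$. Your version is in fact slightly tighter than the paper's: you justify the worst-case $f(t)=f(1)$ by the explicit derivative of $a\mapsto a/(\xi_0-a)^2$ (where the paper appeals to a figure), and you avoid the redundant extra factor of $\frac{1}{2}$ in the paper's final inequality, obtaining exactly the stated $\delta(\epsilon)$.
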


\begin{proof}Assume $\xi=\xi_0 e^{\mathbf{i}\theta}$, where $\theta=-\mathbf{i}(\eta-\eta_0)\in(-\pi,\pi)$. Then we have
\begin{eqnarray*}
\left|\mathrm{Re}[F(\xi,f)-F(\xi_0,f)]\right|&=&\left|\int_0^1\log\left|\frac{\xi_0\cos\theta-f(t)+\mathbf{i}\xi_0\sin\theta}{\xi_0-f(t)}\right|dt\right|\\
&=&\frac{1}{2}\left|\int_0^1\log\left|\frac{[\xi_0\cos\theta-f(t)]^2+[\xi_0\sin\theta]^2}{[\xi_0-f(t)]^2}\right|dt\right|\\
&=&\frac{1}{2}\int_0^1\log\left(1+\frac{2\xi_0f(t)(1-\cos\theta)}{[\xi_0-f(t)]^2}\right)dt.
\end{eqnarray*}
Since $f$ is decreasing in $[0,1]$, from Figure \ref{fig:sdm} we see that
\begin{eqnarray*}
\log\left(1+\frac{2\xi_0f(t)(1-\cos\theta)}{(\xi_0-f(t))^2}\right)\geq \log\left(1+\frac{2\xi_0f(1)(1-\cos\theta)}{[\xi_0-f(1)]^2}\right),\ \forall t\in[0,1].
\end{eqnarray*}
Moreover, the cosine function is even and strictly decreasing in $[0,\pi)$, when $|\eta-\eta_0|\geq \epsilon$, we have 
\begin{eqnarray*}
\log\left(1+\frac{2\xi_0f(1)(1-\cos\theta)}{[\xi_0-f(1)]^2}\right)\geq \frac{1}{2}\log\left(1+\frac{2f(1)\xi_0(1-\cos\epsilon)}{[\xi_0-f(1)]^2}\right).
\end{eqnarray*}
Then the lemma follows.
\end{proof}

\begin{proposition}\label{p28}Let $\lambda\in \GT_N^+$ be defined by
\begin{eqnarray*}
\lambda=(\lambda_1,(N-2)(m-1),(N-3)(m-1),\ldots, m-1,0);
\end{eqnarray*}
where $\lambda_1\geq (N-2)(m-1)$ depends on $N$.
Assume that there exists a positive integer $N_0$, such that for any $N\geq N_0$,
\begin{eqnarray*}
\frac{\lambda_1+N-1}{mN}>1.
\end{eqnarray*}
Let $\eta_1\in \RR$ (depending on $N$) be the unique real solution for the equation
\begin{eqnarray*}
\frac{\lambda_1+N-1}{mN}=\int_0^1\frac{e^{\eta_1}}{e^{\eta_1}-f(t)}dt,
\end{eqnarray*}
then
\begin{eqnarray*}
\frac{1}{N}\log \frac{s_{\lambda}(W)}{s_{\lambda^{(m)}}(W)}=\frac{(\lambda_1+N-1)\eta_1}{mN}-F(e^{\eta_1};f)+o_N(1).
\end{eqnarray*}
where $o_N(1)\rightarrow 0$ as $N\rightarrow\infty$.
\end{proposition}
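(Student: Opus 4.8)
The plan is to evaluate the contour integral (\ref{qlm}) by the steepest descent method, using the circle $C'$ of Lemma \ref{l28} and Lemma \ref{ll29} as the descent contour. Write $\phi(\xi)=y\log\xi-F(\xi;f)$ with $y=\frac{\lambda_1+N-1}{mN}$, so that (\ref{qlm}) reads $\frac{s_{\lambda}(W)}{s_{\lambda^{(m)}}(W)}=\frac{1}{2\pi\mathbf{i}}\oint_{C}e^{N\phi(\xi)}Q(\xi,\lambda,f)\,d\xi$. Under the hypothesis $y>1$ (valid for $N\ge N_0$), Lemma \ref{l28} supplies the unique real critical point $\xi_0=e^{\eta_1}>f(0)$, and since every singularity $w_i^m$ lies in a small neighborhood of the real interval $[f(1^-),f(0)]$, the circle $C'=\{\xi_0e^{\mathbf{i}\theta}\}$ both encloses all the $w_i^m$ and passes through $\xi_0$; hence (\ref{cep}) lets me integrate over $C'$. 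The right side of the proposition is precisely the value of the exponent at the saddle, $\phi(\xi_0)=\frac{(\lambda_1+N-1)\eta_1}{mN}-F(e^{\eta_1};f)$, which is real because $\xi_0>f(0)>0$ and $f$ is real.

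First I would establish the upper bound. On $C'$ one has $|\xi|=\xi_0$, so $\mathrm{Re}[\phi(\xi_0e^{\mathbf{i}\theta})]=y\log\xi_0-\int_0^1\log|\xi_0e^{\mathbf{i}\theta}-f(t)|\,dt$, and since each $f(t)\in(0,\xi_0)$ lies strictly inside the circle one has $|\xi_0e^{\mathbf{i}\theta}-f(t)|\ge|\xi_0-f(t)|$, with equality only at $\theta=0$. This is the display preceding Lemma \ref{ll29}, and it shows that $\mathrm{Re}[\phi]$ attains its maximum $\phi(\xi_0)$ on $C'$ uniquely at $\theta=0$. Since $y$ is bounded away from $1$ under the hypotheses of Theorem \ref{t17}, the saddle $\xi_0$ stays a fixed positive distance beyond $f(0)$, hence $C'$ keeps a fixed distance from the region $A$ of Lemma \ref{ll27} and $\sup_{C'}\log|Q(\xi,\lambda,f)|=O(1)$, uniformly in $N$. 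Bounding the integral by the length of $C'$ times the maximal modulus of its integrand then gives $\frac{1}{N}\log\left|\frac{s_{\lambda}(W)}{s_{\lambda^{(m)}}(W)}\right|\le\phi(\xi_0)+o_N(1)$.

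The heart of the matter is the matching lower bound, for which I would carry out the saddle-point expansion. Differentiating $\phi$ and using the critical relation $\frac{y}{\xi_0}=\int_0^1\frac{dt}{\xi_0-f(t)}$ gives $\phi''(\xi_0)=\int_0^1\frac{f(t)}{\xi_0[\xi_0-f(t)]^2}\,dt>0$, so that $\frac{d^2}{d\theta^2}\mathrm{Re}[\phi(\xi_0e^{\mathbf{i}\theta})]\big|_{\theta=0}=-\xi_0^2\phi''(\xi_0)<0$; thus $C'$ is a genuine descent contour, and near the saddle $\phi(\xi_0e^{\mathbf{i}\theta})=\phi(\xi_0)-\frac{1}{2}\xi_0^2\phi''(\xi_0)\theta^2+O(\theta^3)$. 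I would split $C'$ into the central arc $|\theta|\le N^{-2/5}$ and its complement. Because $\mathrm{Re}[\phi(\xi_0e^{\mathbf{i}\theta})]$ is strictly decreasing in $|\theta|$ on $[0,\pi]$ (which is exactly what the display before Lemma \ref{ll29} and Lemma \ref{ll29} quantify), the complement contributes a modulus that is super-polynomially smaller, relative to $e^{N\phi(\xi_0)}$, than the central arc. On the central arc a standard Gaussian (Laplace) evaluation produces a contribution of size $|Q(\xi_0,\lambda,f)|\cdot\Theta(N^{-1/2})\cdot e^{N\phi(\xi_0)}$; since Lemma \ref{ll27} gives $\frac{1}{N}\log|Q(\xi_0,\lambda,f)|\to0$ and $\frac{1}{N}\log(N^{-1/2})\to0$, this yields $\frac{1}{N}\log\left|\frac{s_{\lambda}(W)}{s_{\lambda^{(m)}}(W)}\right|\ge\phi(\xi_0)-o_N(1)$.

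The step I expect to be the main obstacle is exactly this lower bound: one must rule out cancellation of the near-saddle contributions to leading exponential order. This is secured by the nondegeneracy $\phi''(\xi_0)>0$, which makes the local integral a non-vanishing Gaussian with nonzero leading coefficient proportional to $Q(\xi_0,\lambda,f)\,\mathbf{i}\xi_0$. The finitely many complex variables $u_1,\ldots,u_k$ enter only through the singularities $u_1^m,\ldots,u_k^m$ and through the prefactor $Q$; they shift the saddle data by amounts negligible after dividing by $N$ and rotate $Q(\xi_0,\lambda,f)$ by a bounded argument, so they affect neither the exponential rate nor the fact that the imaginary part of $\frac{1}{N}\log\frac{s_{\lambda}(W)}{s_{\lambda^{(m)}}(W)}$ tends to $0$. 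Combining the two bounds gives $\frac{1}{N}\log\frac{s_{\lambda}(W)}{s_{\lambda^{(m)}}(W)}=\frac{(\lambda_1+N-1)\eta_1}{mN}-F(e^{\eta_1};f)+o_N(1)$, as claimed.
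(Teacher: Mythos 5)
Your proposal is correct and follows essentially the same route as the paper: steepest descent for the integral (\ref{qlm}) over the circle $C'$ through the saddle $\xi_0=e^{\eta_1}$, a split into a near-saddle window of width $N^{-\alpha}$ with $\alpha\in(\frac{1}{3},\frac{1}{2})$ (the paper keeps $\alpha$ general where you fix $\alpha=\frac{2}{5}$), a Gaussian evaluation at the nondegenerate saddle, Lemma \ref{ll29} for the exponential decay of $\mathrm{Re}[\phi]$ off the saddle, and Lemma \ref{ll27} to control the prefactor $Q$. The only cosmetic differences are that you work in the angular variable $\theta$ and organize the estimate as matching upper and lower bounds, whereas the paper substitutes $\eta=\log\xi$ and writes the same content as the decomposition $I=I_1+I_2$.
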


\begin{proof}

Let $\eta$ and $\eta_0$ be given by (\ref{etx}) and (\ref{etx0}), respectively. Then when $\xi\in C'$, $\eta$ lies on the vertical line passing through $\eta_0$. We consider the Taylor expansion of $y\eta-F(e^{\eta},f)$ when $\eta$ is in a neighborhood of $\eta_0$, we obtain
\begin{eqnarray*}
y\eta-F(e^{\eta},f)=y\eta_0-F(e^{\eta_0},f)-\frac{(\eta-\eta_0)^2}{2}\frac{\partial^2 F(e^{\eta_0};f)}{\partial \eta^2}+(\eta-\eta_0)^3 B.
\end{eqnarray*}
where $|B|$ is bounded by the maximum of $\left|\frac{\partial^3F(e^{\eta},f)}{6\partial \eta^3}\right|$ in a neighborhood of $\eta_0$. Note that 
\begin{eqnarray*}
\frac{d[y\eta-\partial F(e^{\eta},f)]}{\partial \eta}=g(e^{\eta}).
\end{eqnarray*}
 whose value is 0 when $\eta=\eta_0$. Moreover
\begin{eqnarray*}
\frac{\partial^2 F(e^{\eta_0},f)}{\partial \eta^2}=-g'(e^{\eta_0})e^{\eta_0}<0.
\end{eqnarray*}
Let 
\begin{eqnarray*}
u&=&-\mathbf{i}\sqrt{-\frac{\partial^2 F(e^{\eta_0};f)}{\partial \eta^2}};\\
\eta&=&\eta_0+\frac{s}{u\sqrt{N}}.
\end{eqnarray*}
Making a change of variables, we obtain
\begin{eqnarray}
I:&=&\frac{1}{2\pi\mathbf{i}}\oint_{C'} e^{N(y\log \xi-F(\xi, f))}\cdot Q(\xi,\lambda, f)d\xi\\
&=&\frac{1}{2\pi\mathbf{i}}\int_{\eta_0-\mathbf{i}\pi}^{\eta_0+\mathbf{i}\pi} e^{N\left(y\eta-F(e^{\eta}, f)\right)}\cdot Q(e^{\eta},\lambda, f)e^{\eta}d\eta.\label{s1}\notag
\end{eqnarray}
We shall split the integral above into two parts
\begin{eqnarray}
I=I_1+I_2;\label{s2}
\end{eqnarray}
where
\begin{eqnarray*}
I_1=\frac{1}{2\pi\mathbf{i}}\int_{\eta_0-\mathbf{i}\epsilon}^{\eta_0+\mathbf{i}\epsilon} e^{N\left(y\eta-F(e^{\eta}, f)\right)}\cdot Q(e^{\eta},\lambda, f)e^{\eta}d\eta;
\end{eqnarray*}
and
\begin{eqnarray*}
I_2=\frac{1}{2\pi\mathbf{i}}\int_{[\eta_0-\mathbf{i}\pi,\eta_0-\mathbf{i}\epsilon]\cup[\eta_0+\mathbf{i}\epsilon,\eta_0+\mathbf{i}\pi]}e^{N\left(y\eta-F(e^{\eta}, f)\right)}\cdot Q(e^{\eta},\lambda, f)e^{\eta}d\eta.
\end{eqnarray*}
Note that
\begin{eqnarray}
|I_2|\leq \frac{1}{2\pi}\int_{[\eta_0-\mathbf{i}\pi,\eta_0-\mathbf{i}\epsilon]\cup[\eta_0+\mathbf{i}\epsilon,\eta_0+\mathbf{i}\pi]}e^{N\mathrm{Re}\left[y\eta-F(e^{\eta}, f)\right]}\cdot \left|Q(e^{\eta},\lambda, f)\right|\left|e^{\eta}\right|\left|d\eta\right|.\label{s3}
\end{eqnarray}
Let $L$ be the vertical line segment between $\eta_0-\mathbf{i}\pi$ and  $\eta_0+\mathbf{i}\pi$. By Lemmas \ref{ll27} and \ref{ll29}, we have
\begin{eqnarray*}
|I_2|\leq e^{\eta_0}e^{O(1)\left(1+\sup_{a\in A, \eta\in L}|\log (e^\eta-a)|+\sup_{a\in A,\eta\in L}\left|\frac{1}{e^\eta-a}\right|\right)}e^{N(y\eta_0-F(e^{\eta_0},f))}e^{-N\delta(\epsilon)}.
\end{eqnarray*}
Moreover,
\begin{eqnarray}
I_1&=&\frac{e^{N(y\eta_0-F(e^{\eta_0}, f))}}{2\pi\mathbf{i}}\int_{\eta_0-\mathbf{i}\epsilon}^{\eta_0+\mathbf{i}\epsilon}e^{-\frac{N(\eta-\eta_0)^2}{2}\frac{\partial^2 F(e^{\eta_0};f)}{\partial \eta^2}+NB(\eta-\eta_0)^3}Q(e^{\eta},\lambda,f)e^{\eta}d\eta\label{s4}\\
&=&\frac{e^{N[y\eta_0-F(e^{\eta_0}, f)]}}{u\sqrt{N}2\pi\mathbf{i}}\int_{-\epsilon|u|\sqrt{N}}^{\epsilon |u|\sqrt{N}}e^{-\frac{s^2}{2}+\frac{s^3}{\sqrt{N}}\tilde{B}}Q\left(e^{\eta_0+\frac{s}{u\sqrt{N}}},\lambda,f\right)e^{\eta_0+\frac{s}{u\sqrt{N}}}ds,\notag
\end{eqnarray}
where
\begin{eqnarray*}
|\tilde{B}|\leq |u|^{-3}\sup_{\eta\in[\eta_0-\mathbf{i}\pi, \eta_0+\mathbf{i}\pi]}\left|\frac{\partial^3F(e^{\eta},f)}{6\partial \eta^3}\right|.
\end{eqnarray*}

Assume
\begin{eqnarray*}
\epsilon\sim N^{-\alpha},\ \mathrm{where}\ \frac{1}{3}<\alpha<\frac{1}{2},
\end{eqnarray*}
as $N\rightarrow\infty$. Then
\begin{eqnarray*}
|u|\epsilon\sqrt{N}\sim N^{\frac{1}{2}-\alpha}\rightarrow \infty,\qquad \mathrm{as}\ N\rightarrow\infty;
\end{eqnarray*}
\begin{eqnarray*}
\sup_{s\in[-\epsilon|u|\sqrt{N},\epsilon|u|\sqrt{N}]}\left|\frac{s^3}{\sqrt{N}}\right|\leq O\left(N^{1-3\alpha}\right)\rightarrow 0,\qquad\mathrm{as}\ N\rightarrow\infty;
\end{eqnarray*}
and
\begin{eqnarray*}
N\delta(\epsilon)\sim N^{1-2\alpha}\rightarrow\infty, \qquad\mathrm{as}\ N\rightarrow\infty.
\end{eqnarray*}
Then by (\ref{qlm}), (\ref{cep}), (\ref{s1}) and (\ref{s2}), we obtain
\begin{eqnarray*}
\frac{1}{N}\log \frac{s_{\lambda}(W)}{s_{\lambda^{(m)}}(W)}=\frac{1}{N}\log\left(I_1+I_2\right).
\end{eqnarray*}
By (\ref{s3}) and (\ref{s4}), we have
\begin{eqnarray*}
I_1+I_2 = e^{N[y\eta_0-F(e^{\eta_0},f)]}\left(\frac{C_1}{\sqrt{N}}+O\left(e^{-N^{1-2\alpha}}\right)\right),
\end{eqnarray*}
where $C_1>0$ is a constant independent of $N$. Then the proposition follows.
\end{proof}

\begin{proposition}\label{pp1}Let $\lambda=(\lambda_1,\ldots,\lambda_N)\in \GT_N^+$. Assume for all the $2\leq j\leq N$, $\lambda_j=(m-1)(N-j)$.  Then
\begin{eqnarray*}
\lim_{N\rightarrow\infty}\frac{1}{N}\log\frac{s_{\lambda}(W)}{s_{\lambda^{(m)}}(W)}+
\lim_{N\rightarrow\infty}\frac{1}{N}\log\frac{s_{\lambda^{(m)}}(X)}{s_{\lambda}(X)}=0.
\end{eqnarray*}
\end{proposition}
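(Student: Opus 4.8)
The plan is to exploit the structural symmetry between the two limits by observing that the quantity $\frac{s_{\lambda^{(m)}}(X)}{s_{\lambda}(X)}$ is simply the reciprocal of $\frac{s_{\lambda}(X)}{s_{\lambda^{(m)}}(X)}$, so that
\begin{eqnarray*}
\frac{1}{N}\log\frac{s_{\lambda^{(m)}}(X)}{s_{\lambda}(X)}=-\frac{1}{N}\log\frac{s_{\lambda}(X)}{s_{\lambda^{(m)}}(X)}.
\end{eqnarray*}
Thus it suffices to show that $\lim_{N\to\infty}\frac{1}{N}\log\frac{s_{\lambda}(W)}{s_{\lambda^{(m)}}(W)}$ and $\lim_{N\to\infty}\frac{1}{N}\log\frac{s_{\lambda}(X)}{s_{\lambda^{(m)}}(X)}$ coincide. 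The key point is that $X=(x_1,\ldots,x_N)$ is a \emph{special case} of $W$, namely the case $k=0$ where no variable is perturbed (equivalently, all $u_i=x_i$). Under the hypothesis that $\lambda$ differs from $\lambda^{(m)}$ only in the first component, both ratios are governed by the same integral representation from Lemma \ref{l63} and the same steepest-descent analysis carried out in Proposition \ref{p28}.

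First I would apply Proposition \ref{p28} to the ratio $\frac{s_{\lambda}(W)}{s_{\lambda^{(m)}}(W)}$, obtaining its leading asymptotics in terms of the critical point $\eta_1$ solving the saddle-point equation relative to the limiting density function $f$. The crucial observation is that the limit in Proposition \ref{p28} depends on $W$ \emph{only through} the function $f$ defined in (\ref{fy}), which by Lemma \ref{ap66} is determined entirely by the sorted values $\hat{w}_i=w_{\sigma(i)}^m$. Since $k$ is fixed and finite while $N\to\infty$, perturbing finitely many of the $x_i$ to $u_i$ changes only $O(1)$ of the $N$ values entering $f$; consequently the limiting density function $f$ associated to $W$ is identical to the one associated to $X$. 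Therefore the saddle point $\eta_1$, the value $y=\frac{\lambda_1+N-1}{mN}$, and the entire expression $\frac{(\lambda_1+N-1)\eta_1}{mN}-F(e^{\eta_1};f)$ are the same whether computed for $W$ or for $X$, up to an $o_N(1)$ error. Hence
\begin{eqnarray*}
\lim_{N\to\infty}\frac{1}{N}\log\frac{s_{\lambda}(W)}{s_{\lambda^{(m)}}(W)}=\lim_{N\to\infty}\frac{1}{N}\log\frac{s_{\lambda}(X)}{s_{\lambda^{(m)}}(X)}.
\end{eqnarray*}
Combining this identity with the reciprocal relation above yields the claimed vanishing of the sum.

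I expect the main obstacle to be verifying rigorously that the limit supplied by Proposition \ref{p28} is genuinely insensitive to the $O(1)$ perturbation of the variables, i.e.\ that replacing $X$ by $W$ does not shift the leading-order term. The honest way to handle this is to note that Lemma \ref{ap66} guarantees $\frac{R_1(w,f)}{N}\to 0$ and $R_\infty(w,f)$ bounded \emph{uniformly} over the allowed choices of $u_i$ in compact neighborhoods of $x_i$; the same lemma applied with all $u_i=x_i$ gives the analogous control for $X$. Since Proposition \ref{p28} is stated for a general $W$ satisfying these bounds and produces a limit expressed purely through $f$, one simply invokes it twice—once for $W$ and once for the unperturbed $X$—and reads off that the two limits agree because they share the same $f$, the same $y$, and hence the same saddle point $\eta_1$. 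One must also confirm that the hypothesis $\frac{\lambda_1+N-1}{mN}>1$ required by Proposition \ref{p28} holds under Condition (1) of Theorem \ref{t17}, which follows since $\lambda_1=\alpha_1 N+O(1)$ with $\alpha_1>1$ forces $y\to\frac{\alpha_1}{m}$, and this exceeds $1$ for $N$ large when $\alpha_1>m$; if $\alpha_1\le m$ one should check the regime directly, but in either case the finitely-many-perturbation argument for the equality of the two limits is unaffected.
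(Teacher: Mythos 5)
Your proposal is essentially the paper's own proof: the paper likewise reduces the claim, via Proposition \ref{p28} applied to both $\frac{s_{\lambda}(W)}{s_{\lambda^{(m)}}(W)}$ and $\frac{s_{\lambda}(X)}{s_{\lambda^{(m)}}(X)}$, to the observation that the limiting profile satisfies $f_W=f_X$ (Lemma \ref{ap66}), since $W$ and $X$ differ in only finitely many entries, so the two saddle-point asymptotics share the same $y$, the same $f$, and the same critical point, and the sum of logarithms vanishes. One small numerical correction to your regime check: under Condition (1), $y=\frac{\lambda_1+N-1}{mN}\to\frac{\alpha_1+1}{m}$ rather than $\frac{\alpha_1}{m}$, though this does not affect the core argument that the two limits coincide.
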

\begin{proof}By Proposition \ref{p28}, it suffices to show that $f_W(t)=f_X(t)$. But this is obviously true by Assumption \ref{ap66} and the definitions of $X$ and $W$.
\end{proof}

Then Theorem \ref{t17} follows from Proposition \ref{pp1} and Lemma \ref{l31}.

\bigskip
\ACKNO{ZL's research is supported by National Science Foundation grant 1608896 and Simons collaboration grant 638143. ZL thanks anonymous reviewers for careful reading of the paper.}

\bibliography{fpmm}
\bibliographystyle{amsplain}
\end{document}